\documentclass[11pt]{amsart}

\usepackage{amssymb,eucal}

\newtheorem{theorem}{Theorem}[section]

\newtheorem{corollary}[theorem]{Corollary}

\numberwithin{equation}{section}

\theoremstyle{definition}

\newtheorem*{example*}{Example}
\newtheorem{example}[theorem]{Example}

\newtheorem*{remark*}{Remark}


\newcommand\fsl{\mathfrak{sl}}
\newcommand\fgl{\mathfrak{gl}}

\newcommand{\ara}{\stackrel {\varphi_1}{\rightarrow}}
\newcommand{\arb}{\stackrel {\varphi_2}{\rightarrow}}
\newcommand{\arc}{\stackrel {\varphi_i}{\rightarrow}}

\newcommand{\cL}{\mathcal{L}}
\newcommand{\cA}{\mathcal{A}}
\newcommand{\g}{\mathfrak{g}}
\newcommand{\fs}{\mathfrak{s}}

\newcommand{\sfA}{\mathsf{A}}
\newcommand{\sfB}{\mathsf{B}}
\newcommand{\sfC}{\mathsf{C}}
\newcommand{\sfE}{\mathsf{E}}
\newcommand{\sfN}{\mathsf{N}}

\newcommand{\sfZ}{\mathsf{Z}}
\newcommand{\sfI}{\mathsf{I}}
\newcommand{\sfS}{\mathsf{S}}
\newcommand{\FF}{\mathbb{F}}

\newcommand{\VV}{\mathsf{V}}

\DeclareMathOperator{\Der}{\mathsf{Der}}
\DeclareMathOperator{\Hom}{\mathsf{Hom}}
\DeclareMathOperator{\tr}{tr}
\DeclareMathOperator{\ann}{\mathsf{ann}}
\DeclareMathOperator{\ad}{\mathsf{ad}}

\begin{document}

\title[Lie Algebras with Prescribed $\fsl_3$ Decomposition]{Lie Algebras with Prescribed \\
$\fsl_3$ Decomposition}

\author[Georgia Benkart]{Georgia Benkart$^{\star}$}
\thanks{$^{\star}$Part of this work was done during a visit of the first author to the University of Zaragoza, supported by the Spanish Ministerio de Educaci\'on y Ciencia and FEDER (MTM 2007-67884-C04-02).}

\address{Department of Mathematics, University of Wisconsin, Madison, WI 53706, USA}
\email{benkart@math.wisc.edu}

\author[Alberto Elduque]{Alberto Elduque$^{\star\star}$}
\thanks{$^{\star\star}$Supported by the Spanish Ministerios de Educaci\'on y Ciencia and Ciencia e Innovaci\'on and
FEDER (MTM 2007-67884-C04-02 and MTM2010-18370-C04-02) and by the Diputaci\'on General de Arag\'on (Grupo de Investigaci\'on de \'Algebra)}
\address{Departamento de Matem\'aticas e Instituto Universitario de Matem\'aticas y Aplicaciones,
Universidad de Zaragoza, 50009 Zaragoza, Spain}
\email{elduque@unizar.es}


\subjclass[2010]{Primary 17B60; Secondary 17A30}

\keywords{Lie algebra, $\fsl_3$ decomposition, structurable algebra}

\begin{abstract}  In this work, we consider Lie algebras $\cL$ containing a subalgebra isomorphic to $\fsl_3$ and  such that
$\cL$ decomposes as a module for that $\fsl_3$ subalgebra
into copies of the adjoint module,  the natural 3-dimensional module
and its dual, and the trivial one-dimensional module.   We determine the multiplication in $\cL$ and establish connections with structurable algebras by exploiting symmetry relative to the symmetric group $\sfS_4$.
\end{abstract}

\maketitle


\begin{section} {Introduction}

The Lie algebra $\fgl_{n+k}$ of $(n+k)\times (n+k)$ matrices over a field $\FF$ of characteristic 0 under the commutator product
$[x,y] = xy-yx$, when viewed as a module for the copy of $\fgl_n$ in its northwest corner,
decomposes into  $k$ copies of the natural $n$-dimensional $\fgl_n$-module $\VV = \FF^n$,
$k$ copies of the dual module $\VV^* = \Hom(\VV, \FF)$,  a copy of the Lie algebra
$\fgl_k$ in its southeast corner, and the copy of $\fgl_n$:
\[
\fgl_{n+k} =  \fgl_n \oplus  \VV^{\oplus k} \oplus (\VV^*)^{\oplus k} \oplus \fgl_k.
\]
As a result,  we may write
\[
\fgl_{n+k} \cong  \fgl_n \oplus (\VV \otimes \sfB)  \oplus (\VV^* \otimes \sfC) \oplus \fgl_k,
\]
where $\sfB = \sfC=\FF^k$.  This second expression reflects the decomposition of
$\fgl_{n+k}$ as a module for $\fgl_n \oplus \fgl_k$.   When restricted to $\fsl_n$,   the $\fgl_n$-modules $\VV$ and $\VV^*$ remain irreducible,  while $\fgl_n$ decomposes into a copy of
the adjoint module and a trivial $\fsl_n$-module spanned by the identity matrix:  $\fgl_n = \fsl_n \oplus \FF \sf{I}_n$.
Thus,  we have the $\fsl_n$ decomposition of $\fgl_{n+k}$,
\begin{equation}\label{eq:gdec}
\fgl_{n+k} \cong \fsl_n \oplus  (\VV \otimes \sfB)  \oplus (\VV^* \otimes \sfC) \oplus \big( \fgl_k \oplus \FF \sfI_n\big),
\end{equation}
where $\fgl_k \oplus \FF \sfI_n$ is the sum of the trivial $\fsl_n$-modules in $\fgl_{n+k}$.     Decompositions such
as \eqref{eq:gdec}  also arise in the
study of direct limits of simple Lie algebras and give insight into their structure.

Indeed, suppose we have a chain of homomorphisms,    \begin{equation}\label{dl}
\mathfrak{g}^{(1)}\ara
\mathfrak{g}^{(2)}\arb \ \ldots \ \rightarrow\mathfrak{g}^{(i)}\arc
\mathfrak{g}^{(i+1)}\rightarrow \ \ldots\,,
\end{equation}
where
$\mathfrak{g}^{(i)}=\fsl(\VV^{(i)})$.    Assume that   $\fsl(\VV)$ is  a fixed term in the chain
for some $\VV = \VV^{(j)}$,  and $\dim \VV = n$.   We identify
$\fsl(\VV)$ with $\fsl_n$ by choosing a basis for $\VV$  and assume that   $\VV^{(i)} = \VV^{\oplus k_i} \oplus  \FF^{\oplus z_i}$ as a module for $\fsl_n$
for $i \geq j$.   Then the limit Lie algebra $\cL =
\mathop{\lim}\limits_{\longrightarrow}
\mathfrak{g}^{(i)}$ admits a decomposition relative to $\fsl_n$,
\begin{equation}\label{eq:dec}
\cL \cong  (\fsl_n \otimes \sfA) \oplus (\VV \otimes \sfB) \oplus (\VV^\ast  \otimes \sfC) \oplus \fs,
\end{equation}
where $\fs$ is the sum of the trivial $\fsl_n$-modules (see \cite[Sec.~5]{BB}).  Bahturin and Benkart in \cite[Sec.~4]{BB}
study Lie algebras having such a decomposition and describe the multiplication in $\cL$  and  the possibilities for $\sfA, \sfB, \sfC, \fs$ when
$\dim \VV \geq 4$.    When $\dim \VV = 2$, then
$\VV^*$ is isomorphic to $\VV$ as a module for $\fsl_2 = \fsl(\VV)$.     In this case,  a Lie algebra having a decomposition,
 $\cL =  (\fsl_2  \otimes \sfA) \oplus (\VV \otimes \sfB)  \oplus \fs$  is graded by the root system $\mathsf{BC}_1$,
 and its structure has been described in \cite{BS}.

In this paper,  we investigate the missing case when $\dim \VV = 3$, which presents very distinctive features.
For direct limit Lie algebras of the type considered above, we could,  of course, choose a larger  space $\VV^{(j)}$ having
$\dim \VV^{(j)} \geq 4$  and apply the results of \cite{BB}.  However,
there are many examples of Lie algebras which admit  very interesting decompositions as in \eqref{eq:dec} for $n = 3$.   The exceptional simple Lie algebras provide examples of this phenomenon.

\begin{example}  Each  exceptional simple Lie algebra $\cL$ over an algebraically closed
field of characteristic 0  has an automorphism $\psi$  of order 3 that corresponds to a certain node
in the Dynkin diagram of the associated affine Lie algebra.   The node is marked with a ``3'' in
{\rm \cite[TABLE Aff 1]{K}}.    Removing that node gives the Dynkin diagram of a finite-dimensional semisimple Lie
algebra $\fsl_3 \oplus \mathfrak s$, which is the subalgebra of  fixed points of the automorphism $\psi$.    The Lie algebra
$\mathfrak s$ is the centralizer of $\fsl_3$ in $\cL$; hence, is the sum of trivial $\fsl_3$-modules under the adjoint action.
In this table we display the Lie algebra $\mathfrak s$:
\begin{equation}
\begin{matrix}
\begin{tabular}[t]{|c||c|c|c|c|c|}
\hline
$\cL$ &
$ \mathsf{G}_2 $ & $ \mathsf{F}_4$ & $ \mathsf{E}_6$ &  $\mathsf{E}_7$ &  $\mathsf{E}_8$
\\  \hline
$\mathfrak s$ &  $0$ & $\fsl_3$ &  $\fsl_3 \oplus \fsl_3$  & $\fsl_6$  & $\mathsf{E}_6$ \\
\hline
\end{tabular}
\end{matrix}
\end{equation}
\smallskip

For the Lie algebra $\mathsf{G}_2$ we have the well-known decomposition {\rm (see \cite[Prop.~3] {J})}
\[
\mathsf{G}_2  \cong   \fsl_3 \oplus \VV \oplus \VV^*
\]
relative to $\fsl_3$
(where $\fsl_3$ corresponds to the long roots of $\mathsf{G}_2$ and $\VV = \FF^3$).
This decomposition can be viewed as the decomposition into eigenspaces relative to $\psi$, where
$\VV$ corresponds  to the eigenvalue $\omega$ (a primitive cube root of 1); $\VV^\ast$ to the eigenvalue $\omega^2$;
and $\fsl_3$ to the eigenvalue 1.

For the other exceptional Lie algebras,
\begin{equation}\label{eq:A=F}
\cL \cong   \fsl_3 \oplus  (\VV \otimes \sfB) \oplus (\VV^\ast \otimes \sfC) \oplus \mathfrak s,
\end{equation}
where $\sfB$ and $\sfC$ can be identified with $\mathsf{H}_3(\mathcal C)$,  the algebra of $3 \times 3$ hermitian matrices over a
composition algebra $\mathcal C$ under the product $h \circ h' = {1/2} (hh'+h'h)$.    Thus, elements of $\sfB$
have the form
\[
h =  \left[\begin{array}{ccc}  \alpha & a & b \\  \bar a & \beta & c \\ \bar b & \bar c & \gamma \end{array}\right]
\]
where $\alpha,\beta,\gamma \in \FF$,  $a,b,c \in \mathcal C$, and
 ``$\ \bar{}\ $'' is the standard involution in $\mathcal C$.   The composition algebra $\mathcal C$
is displayed below,
\begin{equation}
\begin{matrix}
\begin{tabular}[t]{|c||c|c|c|c|}
\hline
$\cL$ &
 $ \mathsf{F}_4$ & $ \mathsf{E}_6$ &  $\mathsf{E}_7$ &  $\mathsf{E}_8$
\\  \hline
$\mathcal C$ & $\FF$ &  $\mathsf{K}$  & $\mathsf{Q}$  & $\mathsf{O} $ \\
\hline
\end{tabular}
\end{matrix}
\vrule width 0pt depth 20pt
\end{equation}
where  $\mathsf{K}$ is the algebra $\FF\times\FF$,  $\mathsf{Q}$ the algebra of quaternions,  and $\mathsf{O}$ the algebra of octonions.     The algebra $\mathfrak{s}$ can be identified with
the structure Lie  algebra of $\sfB = \mathsf{H}_3(\mathcal C)$,
$\mathfrak s = \Der(\sfB) \oplus \mathsf{L}_{\sfB_0}$, consisting of the derivations and multiplication maps $\mathsf{L}_h(h') = h \circ h'$ for  $h \in \sfB_0$  (the matrices in $\sfB$ of trace 0).      Here $\VV \otimes \sfB$ is the $\omega$-eigenspace of $\psi$,
 $\VV^\ast \otimes \sfC$ the $\omega^2$-eigenspace,   and $\fsl_3 \oplus \mathfrak{s}$  the $1$-eigenspace.

For example, when $\mathcal C = \mathsf{O}$, it is well known that  $\sfB = \mathsf{H}_3(\mathsf O)$ is the 27-dimensional exceptional simple Jordan algebra,  and its structure algebra $\mathfrak s$ is a simple Lie
algebra of type $\mathsf{E}_6$ (see for example, {\rm \cite[Chap.~IV, Sec.~4]{S})}.
As a module for $\mathsf{E}_6$,  $\sfB$ is irreducible,  and relative to a certain Cartan subalgebra,  it has highest weight the first fundamental weight.
The module $\sfC$ is  an irreducible $\mathsf{E}_6$-module, (the dual module of $\sfB$)  which
has  highest weight the last fundamental weight.
Thus,   $$\mathsf{E}_8 = \fsl_3 \oplus (\VV \otimes \sfB) \oplus (\VV^* \otimes \sfC) \oplus \mathsf{E}_6.$$
Reading right to left, we see the decomposition of $\mathsf{E}_8$ as a module for the subalgebra
of type $\mathsf{E}_6$, and reading left to right, its decomposition as an $\fsl_3$-module.\qed
\end{example}

Recently, Lie algebras with a decomposition \eqref{eq:A=F} have been considered by Faulkner \cite[Lem. 22]{Fau10} in connection with his classification of structurable superalgebras of classical type. (Structurable algebras, which were introduced and studied in \cite{A},  form a certain variety of algebras generalizing associative
algebras with involution and Jordan algebras.)

\bigskip

In this work, we examine Lie algebras $\cL$ such that $\cL$
has a subalgebra $\fsl_3$ and such that $\cL$ admits
a decomposition as in \eqref{eq:dec} into copies of $\fsl_3$, $\VV = \FF^3$, $\VV^\ast$, and trivial
modules relative to the action of $\fsl_3$.
Applying results in \cite{BB} and \cite{BM},  we determine that $\sfA$ is an alternative algebra, $\sfB$ a left $\sfA$-module, and $\sfC$ a right $\sfA$-module, and we describe $\mathfrak s$ and the multiplication in $\cL$.

Using the fact that $\VV$ can be given the structure of a module for the symmetric group $\mathsf{S}_4$, we obtain an action of $\mathsf{S}_4$ by automorphisms on $\cL$.  The elements  $\tau_1 = (1\,2)(3\,4)$ and $\tau_2 = (1\,4)(2 \,3)$ generate a normal subgroup of $\mathsf{S}_4$ which is a Klein 4-subgroup.    Results of Elduque and Okubo \cite{EO} enable us to deduce that  ${\cL}_0 =  \{X \in \cL \mid \tau_1 X = X, \ \tau_2 X = - X\}$ is a structurable algebra under a certain multiplication.
  We identify the structurable algebra  $\cL_0$ with
the space of  $2 \times 2$ matrices
\[
\cA = \left [ \begin{array}{cc}  \sfA & \sfC   \\  \sfB &\sfA \end{array}\right]
\]
under a suitable multiplication.   When $\cL$ is the exceptional Lie algebra $\mathsf{E}_8$,
then  ${ \mathcal A} = \left [ \begin{array}{cc}  \FF & \sfC   \\  \sfB &\FF \end{array}\right]$ where
$\sfB = \sfC = \mathsf{H}_3(\mathsf{O})$.
This is a simple structurable algebra (see \cite[Secs. 8 and 9]{A}).

\end{section}


\begin{section}{Lie algebras with prescribed $\fsl_3$ decomposition}

Let $\cL$ be a Lie algebra over a  field $\FF$ of characteristic $\ne 2,3$ (this assumption on the underlying field will be kept throughout), which contains a subalgebra isomorphic to $\fsl(\VV)$, for a vector space $\VV$ of dimension $3$, so that $\cL$ decomposes, as a module for $\fsl(\VV)$ into a direct sum of copies of the adjoint module, the natural module $\VV$, its dual $\VV^*$,  and the trivial one-dimensional module. Thus, we write as in \eqref{eq:dec}:
\begin{equation}\label{eq:LslVVs}
\cL=\bigl(\fsl(\VV)\otimes \sfA\bigr)\oplus\bigl(\VV\otimes \sfB\bigr)\oplus\bigl(\VV^*\otimes \sfC\bigr)\oplus\fs
\end{equation}
for suitable vector spaces $\sfA,\sfB,\sfC$, and for a Lie subalgebra $\fs$, which is the subalgebra of elements of $\cL$ annihilated by the elements in $\fsl(\VV)$. The vector space $\sfA$ contains a distinguished element $1\in\sfA$ such that $\fsl(\VV)\otimes 1$ is the subalgebra isomorphic to $\fsl(\VV)$ we have started with.

Fix a nonzero linear map $\det:\bigwedge^3 \VV\rightarrow \FF$. This determines another such form $\det: \bigwedge^3\VV^*\rightarrow \FF$ such that $\det(f_1\wedge f_2\wedge f_3)\det(v_1\wedge v_2\wedge v_3)={\boldsymbol \det}\Bigl(f_i(v_j)\Bigr)$ for any $f_1,f_2,f_3\in \VV^*$ and $v_1,v_2,v_3\in \VV$. (The symbol  ``${\boldsymbol \det}$'' denotes the usual determinant.)

This allows us to identify $\bigwedge^2 \VV$ with $\VV^*$: $u_1\wedge u_2\leftrightarrow \det(u_1\wedge u_2\wedge \underline{\ \ })$ and, in the same vein, $\bigwedge^2 \VV^*$ with  $\VV$.

\smallskip

The invariance of the bracket in $\cL$ relative to the subalgebra $\fsl(\VV)$ gives equations as in \cite[(19)]{BB}:

\begin{equation}\label{eq:bracket}
\begin{split}
[x\otimes a,y\otimes a']&=[x,y]\otimes \frac{1}{2}a\circ a'\, +\, x\circ y\otimes \frac{1}{2} [a,a']+(x\vert y)D_{a,a'},\\
[x\otimes a,u\otimes b]&=xu\otimes ab,\\
[v^*\otimes c,x\otimes a]&=v^*x\otimes ca,\\
[u\otimes b,v^*\otimes c]&=\bigl(uv^*-\frac{1}{3}(v^*u)\sfI_3\bigr)\otimes T(b,c)+\frac{1}{3}(v^*u)D_{b,c},\\
[u_1\otimes b_1,u_2\otimes b_2]&= (u_1\wedge u_2) \otimes (b_1\times b_2),\\
[v_1^*\otimes c_1,v_2^*\otimes c_2]&= (v_1^*\wedge v_2^*) \otimes (c_1\times c_2),\\
[d,x\otimes a]&=x\otimes da,\\
[d,u\otimes b]&=u\otimes db,\\
[d,v^*\otimes c]&=v^*\otimes dc,
\end{split}
\end{equation}
for any $x,y\in\fsl(\VV)$, $u,u_1,u_2\in \VV$, $v^*,v_1^*,v_2^*\in\VV^*$, $d\in\fs$, $a,a'\in\sfA$, $b,b_1,b_2\in\sfB$ and $c,c_1,c_2\in\sfC$, and for bilinear maps:
\begin{equation}\label{eq:bilinearmaps}
\begin{split}
&\sfA\times\sfA\rightarrow\sfA: (a,a')\mapsto a\circ a'\  \ \text{commutative},\\
&\sfA\times\sfA\rightarrow\sfA: (a,a')\mapsto [a,a']\  \ \text{anticommutative},\\
&\sfA\times\sfA\rightarrow\fs, (a,a')\mapsto D_{a,a'}\ \  \text{skew-symmetric},\\
&\sfA\times\sfB\rightarrow\sfB: (a,b)\mapsto ab,\\
&\sfC\times \sfA\rightarrow\sfC: (c,a)\mapsto ca,\\
&\sfB\times\sfC\rightarrow\sfA: (b,c)\mapsto T(b,c),\\
&\sfB\times\sfC\rightarrow\fs: (b,c)\mapsto D_{b,c},\\
&\sfB\times\sfB\rightarrow\sfC: (b_1,b_2)\mapsto b_1\times b_2\ \ \text{symmetric},\\
&\sfC\times\sfC\rightarrow\sfB: (c_1,c_2)\mapsto c_1\times c_2\ \ \text{symmetric},
\end{split}
\end{equation}
and representations $\fs\rightarrow \fgl(\sfA),\fgl(\sfB),\fgl(\sfC)$, whose action is denoted by $da$, $db$, and $dc$ for $d\in \fs$ and $a\in \sfA$, $b\in\sfB$ and $c\in\sfC$;  where, as in \cite[(17)]{BB},
\begin{equation}\label{eq:slnotation}
\begin{split}
x\circ y&=xy+yx-\frac{2}{3}\tr(xy)\sfI_3,\\
(x\vert y)&=\frac{1}{3}\tr(xy),
\end{split}
\end{equation} for $x,y\in\fsl(\VV)$, and $\sfI_3$ denotes the identity map.
The difference with \cite[(19)]{BB} lies in the appearance of the symmetric maps $b_1\times b_2$ and $c_1\times c_2$ when $\VV$ has dimension 3. This slight difference has a huge impact.

The distinguished element $1\in\sfA$ satisfies $1\circ a=a$, $[1,a]=0$, $D_{1,a}=0$, $1b=b$, $c1=c$,  and $d1=0$ for any $a\in \sfA$, $b\in\sfB$, $c\in\sfC$ and $d\in\fs$.

\smallskip

\begin{theorem}\label{th:Lie}
Let $\cL$ be a vector space as in \eqref{eq:LslVVs} and define an anticommutative bracket in $\cL$ by \eqref{eq:bracket} for bilinear maps as in \eqref{eq:bilinearmaps}. Then $\cL$ is a Lie algebra if and only if the following conditions are satisfied:
\begin{enumerate}
\item[(0)] $\fs$ is a Lie subalgebra of $\cL$, $\sfA$, $\sfB$, $\sfC$ are modules for $\fs$ relative to the given actions, and the bilinear maps in \eqref{eq:bilinearmaps} are $\fs$-invariant.

\item[(1)] $\sfA$ is an alternative algebra relative to the multiplication
    \[
    aa'=\frac{1}{2}a\circ a'+\frac{1}{2}[a,a'],
    \]
    and the map $A\times A\rightarrow A: (a,a')\mapsto D_{a,a'}$ satisfies the conditions
    \[
    \begin{split}
    &\sum_{\text{\rm cyclic}}D_{a_1,a_2a_3}=0,\\ &D_{a_1,a_2}a_3=[[a_1,a_2],a_3]+3\bigl((a_1a_3)a_2-a_1(a_3a_2)\bigr),
    \end{split}
    \]
    for any $a_1,a_2,a_3\in \sfA$.

\item[(2)] For any $a_1,a_2\in\sfA$, $b\in \sfB$ and $c\in\sfC$,
    \[
    \begin{split}
    &a_1(a_2b)=(a_1a_2)b,\\
    &(ca_1)a_2=c(a_1a_2),
    \end{split}
    \]
    so that $\sfB$ (respectively $\sfC$) is a left associative module (resp. right associative module) for $\sfA$, and
    \[
    \begin{split}
    &D_{a_1,a_2}b=[a_1,a_2]b,\\
    &D_{a_1,a_2}c=c[a_2,a_1].
    \end{split}
    \]

\item[(3)] For any $a\in\sfA$, $b\in\sfB$ and $c\in\sfC$,
    \[
    \begin{split}
    &aT(b,c)=T(ab,c),\quad T(b,c)a=T(b,ca),\\
    &D_{a,T(b,c)}=D_{ab,c}-D_{b,ca},\\
    &D_{b,c}a=[T(b,c),a].
    \end{split}
    \]

\item[(4)] For any $a\in \sfA$, $b_1,b_2\in\sfB$ and $c_1,c_2\in\sfC$,
    \[
    \begin{split}
    &(b_1\times b_2)a=(ab_1)\times b_2=b_1\times(ab_2),\\
    &a(c_1\times c_2)=(c_1a)\times c_2=c_1\times (c_2a).
    \end{split}
    \]

\item[(5)] $D_{b,b\times b}=0$ for any $b\in\sfB$  and $D_{c\times c,c}=0$ for any $c\in\sfC$. In addition,  the trilinear maps $\sfB\times\sfB\times\sfB\rightarrow \sfB: (b_1,b_2,b_3)\mapsto T(b_1,b_2\times b_3)$  and $\sfC\times\sfC\times\sfC\rightarrow \sfC: (c_1,c_2,c_3)\mapsto T(c_1\times c_2,c_3)$ are symmetric.

\item[(6)] For any $b,b_1,b_2\in \sfB$ and $c,c_1,c_2\in\sfC$,
    \[
    \begin{split}
    (b_1\times b_2)\times c&
     =-\frac{1}{3}T(b_1,c)b_2+\frac{1}{3}D_{b_1,c}b_2-T(b_2,c)b_1,\\
    b\times (c_1\times c_2)&
    =-c_2T(b,c_1)-\frac{1}{3}c_1T(b,c_2)-\frac{1}{3}D_{b,c_2}c_1.
    \end{split}
    \]
\end{enumerate}
\end{theorem}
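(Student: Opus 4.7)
The plan is to reduce the Lie condition on $\cL$ to the Jacobi identity and then verify it one triple at a time. Since the bracket \eqref{eq:bracket} is built to be anticommutative (via the symmetries of the bilinear maps in \eqref{eq:bilinearmaps} and the standard antisymmetrization on $\fsl(\VV)$), only Jacobi is at stake. Moreover, as the bracket is manifestly $\fsl(\VV)$-equivariant, every Jacobian $[[X_1,X_2],X_3]+\text{cyclic}$ decomposes according to the four summands in \eqref{eq:LslVVs} and, within each summand, along $\fsl(\VV)$-isotypic components. Matching the scalar coefficients in each component turns Jacobi into a finite system of identities on $\sfA,\sfB,\sfC,\fs$; these are exactly the list (0)--(6).

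For the forward direction, assume $\cL$ is Lie and run through triples organized by the summand type of their arguments (up to reordering using anticommutativity). Triples entirely in $\fs$ and triples of the form $(d,d',X)$ for $X$ in any of the other summands yield condition (0): $\fs$ is a Lie subalgebra acting by derivations that commute with all the maps of \eqref{eq:bilinearmaps}. Triples inside $\fsl(\VV)\otimes \sfA$ produce the alternative law on $\sfA$ together with the two $D_{a_1,a_2}$-identities of (1); this is the analysis of \cite[Sec.~4]{BB} transferred verbatim to $n=3$. Triples of mixed types $(\fsl(\VV)\otimes\sfA,\fsl(\VV)\otimes\sfA,\VV\otimes\sfB)$ and $(\fsl(\VV)\otimes\sfA,\VV\otimes\sfB,\VV^*\otimes\sfC)$, plus their duals, give (2) and (3); once again the arguments of \cite{BB} apply with only trivial modifications, since the new symmetric products $b_1\times b_2$ and $c_1\times c_2$ do not intervene in these types.

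The genuinely new content in dimension $3$ is concentrated in conditions (4)--(6), which arise from triples where two or more slots come from $\VV\otimes\sfB$ (or two or more from $\VV^*\otimes\sfC$). For (4), I expand Jacobi on $(x\otimes a,u_1\otimes b_1,u_2\otimes b_2)$: the term $[x\otimes a,(u_1\wedge u_2)\otimes (b_1\times b_2)]$, together with $[xu_i\otimes a b_i,\,u_{3-i}\otimes b_{3-i}]$, forces the $\sfA$-invariance of $\times$; the dual triple yields the $\sfC$ identity. For (5), Jacobi on $(u_1\otimes b_1,u_2\otimes b_2,u_3\otimes b_3)$ lands entirely in $\VV^*\otimes\sfC$ and $\fs$; separating those components, and using the isomorphism $\bigwedge^3\VV\cong\FF$ coming from $\det$, produces both $D_{b,b\times b}=0$ and the symmetry of $(b_1,b_2,b_3)\mapsto T(b_1,b_2\times b_3)$. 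The conjugate triple gives the corresponding $\sfC$ statements. For the converse, assuming (0)--(6), I would re-run the same computations in each case to confirm that every cyclic sum vanishes.

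The main obstacle will be condition (6), which arises from Jacobi on $(u_1\otimes b_1,u_2\otimes b_2,v^*\otimes c)$ and its $(c_1,c_2,b)$ mirror. Here one must expand three double brackets: two of them feed $\VV\otimes\sfB$ through $[v^*\otimes c,u_i\otimes b_i]$, landing in $(\fsl(\VV)\otimes\sfA)\oplus\fs$ and then bracketed back with $u_{3-i}\otimes b_{3-i}$; the third comes from $[(u_1\wedge u_2)\otimes(b_1\times b_2),v^*\otimes c]$, which uses the identification $\bigwedge^2 \VV\cong \VV^*$ via $\det$ in order to feed a pair in $\VV^*\otimes\sfC,\VV^*\otimes\sfC$ back through the symmetric product on $\sfC$. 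Disentangling these requires careful bookkeeping of the $\tfrac13$-corrections from the traceless part $uv^*-\tfrac13(v^*u)\sfI_3$, of the constants in the dualities $\bigwedge^2\VV\cong \VV^*$ and $\bigwedge^2\VV^*\cong\VV$, and of the contraction rules $(u_1\wedge u_2)(v^*)=\det(u_1\wedge u_2\wedge\underline{\ \ })(v^*)$. By $\fsl(\VV)$-equivariance the three vectorial components that survive must be proportional to a single pair of $\sfB$-valued expressions, and equating coefficients yields precisely the two formulas of (6); the converse direction is then obtained by reversing these identifications.
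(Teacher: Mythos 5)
Your proposal follows essentially the same route as the paper: organize the Jacobi identity by the $\fsl(\VV)$-isotypic types of the three arguments, import the cases already covered by \cite{BB} (and \cite{BM} for the alternativity of $\sfA$ in item (1)), and work out the genuinely new triples involving two or three factors from $\VV\otimes\sfB$ or $\VV^*\otimes\sfC$ to obtain (4)--(6), with the converse by direct verification. One small correction: the Jacobian of three elements of $\VV\otimes\sfB$ lands in $\bigl(\fsl(\VV)\otimes\sfA\bigr)\oplus\fs$, not in $\bigl(\VV^*\otimes\sfC\bigr)\oplus\fs$, since the double bracket $[(u_1\wedge u_2)\otimes(b_1\times b_2),u_3\otimes b_3]$ is of type $[\VV^*\otimes\sfC,\VV\otimes\sfB]$; it is the $\fsl(\VV)\otimes\sfA$ component that yields the symmetry of $T(b_1,b_2\times b_3)$, while the $\fs$ component gives $\sum_{\mathrm{cyclic}}D_{b_1,b_2\times b_3}=0$ and hence $D_{b,b\times b}=0$.
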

\begin{proof}
If $\cL$ is a Lie algebra under the bracket defined in \eqref{eq:bracket}, then it is clear that $\fs$ is a Lie subalgebra and all the conditions in item (0) are satisfied. Moreover, $\bigl(\fsl(\VV)\otimes \sfA)\oplus\fs$ is a Lie subalgebra, and the arguments in \cite[Sec. 3]{BM}
show that $\sfA$ is an alternative algebra,  and the conditions in item (1) are satisfied.

The arguments in Propositions 4.3 and 4.4,  and Equations (25) and (27) in \cite{BB},  work here and give the conditions in item (2). (Note that there is a minus sign missing in \cite[(27)]{BB}.)

Now, equations (30)--(33) in \cite{BB} establish the identitites in (3). The Jacobi identity applied to elements $x\otimes a$, $u_1\otimes b_1$ and $u_2\otimes b_2$, for $x\in\fsl(\VV)$, $u_1,u_2\in \VV$, $a\in\sfA$ and $b_1,b_2\in\sfB$, give the first equation in item (4), the second one being similar.

The Jacobi identity for elements $u_i\otimes b_i$, $i=1,2,3$, for $u_i\in \VV$ and $b_i\in\sfB$  gives:
\[
\sum_{\text{cyclic}}D_{b_1,b_2\times b_3}=0,\quad T(b_1,b_2\times b_3)=T(b_2,b_3\times b_1),
\]
which, in view of the symmetry of the bilinear map $b_1\times b_2$, proves half of the assertions in item (5);   the other half being implied by the Jacobi identity for elements $v_i^*\otimes c_i$, $i=1,2,3$, for $v_i^*\in \VV^*$ and $c_i\in\sfC$.

Finally, for elements $u_1,u_2\in \VV$, $v^*\in \VV^*$, $b_1,b_2\in\sfB$, $c\in\sfC$:
\[
\begin{split}
&[[u_1\otimes b_1,u_2\otimes b_2],v^*\otimes c]
  =[(u_1\wedge u_2)\otimes (b_1\times b_2),v^*\otimes c]\\
&\quad =(u_1\wedge u_2)\wedge v^* \otimes (b_1\times b_2)\times c
 =\bigl((v^*u_1)u_2-(v^*u_2)u_1\bigr)\otimes (b_1\times b_2)\times c,
\end{split}
\]
while
\[
\begin{split}
&[[u_1\otimes b_1,v^*\otimes c],u_2\otimes b_2]\\
&\quad =[\bigl(u_1v^*-\frac{1}{3}(v^*u_1)\sfI_3\bigr)\otimes T(b_1,c)
   +\frac{1}{3}v^*u_1D_{b_1,c},u_2\otimes b_2]\\
&\quad =\bigl((v^*u_2)u_1-\frac{1}{3}(v^*u_1)u_2\bigr)\otimes T(b_1,c)b_2
  +\frac{1}{3}(v^*u_1)u_2\otimes D_{b_1,c}b_2,\\[6pt]
&[u_1\otimes b_1,[u_2\otimes b_2,v^*\otimes c]]\\
&\quad [u_1\otimes b_1,\bigl(u_2v^*-\frac{1}{3}(v^*u_2)\sfI_3\bigr)\otimes T(b_2,c)
   +\frac{1}{3}v^*u_2D_{b_2,c}]\\
&\quad =-\bigl((v^*u_1)u_2-\frac{1}{3}(v^*u_2)u_1\bigr)\otimes T(b_2,c)b_1
  -\frac{1}{3}(v^*u_2)u_1\otimes D_{b_2,c}b_1.
\end{split}
\]
Hence the Jacobi identity here is equivalent to the first condition in item (6);  the second condition
can be proven in a similar way.

\smallskip

The converse follows from straightforward computations.
\end{proof}

\smallskip

Given an alternative algebra $\sfA$, the ideal $\sfE(\sfA)$ generated by the associators $(a_1,a_2,a_3)=(a_1a_2)a_3-a_1(a_2a_3)$ is $\sfE(\sfA)=(\sfA,\sfA,\sfA)+(\sfA,\sfA,\sfA)\sfA=(\sfA,\sfA,\sfA)+\sfA(\sfA,\sfA,\sfA)$. The associative nucleus of $\sfA$ is $\sfN(\sfA):=\{a\in\sfA \mid (a,\sfA,\sfA)=0\}$, while the center is $\sfZ(\sfA)=\{a\in \sfN(\sfA) \mid  aa'=a'a, \  \ \forall \  a'\in\sfA\}$.

\begin{corollary}\label{co:Lie}
Let $\cL$ be a Lie algebra which contains a subalgebra isomorphic to $\fsl(\VV)$ for a vector space $\VV$ of dimension $3$, so that $\cL$ decomposes, as a module for $\fsl(\VV)$,  as in \eqref{eq:LslVVs}. Then, with the notation used so far, the alternative algebra $\sfA$ is unital (the distinguished element $1$ being its unit element), with $1$ acting as the identity on both $\sfB$ and $\sfC$, and the following conditions hold:
\begin{itemize}
\item $\sfE(\sfA)\sfB=0=\sfC \sfE(\sfA)$, so that $\sfB$ (respectively $\sfC$) is a left (resp. right) module for the associative algebra $\sfA/\sfE(\sfA)$.
\item $T(\sfB,\sfC)$ is an ideal of $\sfA$ contained in its associative nucleus $\sfN(\sfA)$, and $T(\sfB,\sfB\times \sfB)$ and $T(\sfC\times \sfC,\sfC)$ are ideals of $\sfA$ contained in $\sfZ(\sfA)$.
\item For any $b,b_1,b_2\in\sfB$ and any $c,c_1,c_2\in\sfC$, the following conditions hold:
    \[
    \begin{split}
    &D_{b_1,c}b_2-D_{b_2,c}b_1=2\bigl(T(b_2,c)b_1-T(b_1,c)b_2\bigr),\\
    &D_{b,c_2}c_1-D_{b,c_1}c_2=2\bigl(c_1T(b,c_2)-c_2T(b,c_1)\bigr).
    \end{split}
    \]
\item If the Lie algebra $\cL$ is simple, then either the algebra $\sfA$ is associative, or else $\sfA=\sfE(\sfA)$ and $\sfB=\sfC=0$. Moreover, if $\sfB\ne 0$, then $\sfC$ coincides with $\sfB\times\sfB$, and $\sfA$ coincides with $T(\sfB,\sfB\times\sfB)$,  and $\sfA$  is a commutative and associative algebra.
\end{itemize}
\end{corollary}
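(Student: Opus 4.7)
The corollary consists of four bullets, which I would address in order using conditions (0)--(6) of Theorem~\ref{th:Lie} and the identities $1\cdot a=a$, $1b=b$, $c1=c$, $D_{1,a}=0$ recorded just before it.

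For the first bullet, condition~(2) makes $\sfB$ a left associative $\sfA$-module, so every associator $(a_1,a_2,a_3)$ annihilates $\sfB$; applying associativity once more, $\sfA\bigl((\sfA,\sfA,\sfA)\sfB\bigr)=0$, and since $\sfE(\sfA)=(\sfA,\sfA,\sfA)+\sfA(\sfA,\sfA,\sfA)$, we conclude $\sfE(\sfA)\sfB=0$; $\sfC\sfE(\sfA)=0$ is dual. For the second bullet, the ideal property of $T(\sfB,\sfC)$ is immediate from~(3), and $(T(b,c),a_1,a_2)=T(b,c(a_1a_2))-T(b,(ca_1)a_2)$ vanishes by~(2), placing $T(\sfB,\sfC)$ in $\sfN(\sfA)$. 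For $T(\sfB,\sfB\times\sfB)\subseteq\sfZ(\sfA)$, the ideal property combines (3) and (4): $T(b_1,b_2\times b_3)a=T(b_1,(ab_2)\times b_3)$; centrality uses (3) to rewrite $[T(b_1,b_2\times b_3),a]=D_{b_1,b_2\times b_3}a$, symmetric in $(b_1,b_2,b_3)$ by~(5), while the Jacobi identity for three $\VV\otimes\sfB$ elements (from the proof of Theorem~\ref{th:Lie}(5)) gives $\sum_{\text{cyclic}}D_{b_1,b_2\times b_3}=0$; a fully symmetric trilinear expression with vanishing cyclic sum is zero. The case $T(\sfC\times\sfC,\sfC)$ is dual.

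For the third bullet I exploit the symmetry of $\times$ on $\sfB$: the left side $(b_1\times b_2)\times c$ of the first equation of~(6) is symmetric in $b_1,b_2$, so writing the equation, swapping $b_1\leftrightarrow b_2$, and subtracting yields $D_{b_1,c}b_2-D_{b_2,c}b_1=2\bigl(T(b_2,c)b_1-T(b_1,c)b_2\bigr)$; the $\sfC$-version follows from the second equation of~(6) in the same way.

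For the last bullet, I first prove the dichotomy. Set
\[
\sfI=\bigl(\fsl(\VV)\otimes\sfE(\sfA)\bigr)\oplus D_{\sfE(\sfA),\sfA}\,.
\]
Its brackets with $\VV\otimes\sfB$ and $\VV^*\otimes\sfC$ vanish by the first bullet and the formula $D_{a_1,a_2}b=[a_1,a_2]b$ in~(2); brackets with $\fsl(\VV)\otimes\sfA$ stay in $\sfI$ because $\sfE(\sfA)$ is a two-sided ideal and because $D_{\sfE(\sfA),\sfA}\sfA\subseteq\sfE(\sfA)$ by the formula for $D_{a_1,a_2}a_3$ in~(1); $\fs$ preserves $\sfE(\sfA)$ as it acts by derivations. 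Simplicity forces $\sfI=0$ (whence $\sfA$ is associative) or $\sfI=\cL$, in which case the absence of $\VV$- or $\VV^*$-components in $\sfI$ forces $\sfB=\sfC=0$ and $\sfE(\sfA)=\sfA$.

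For the ``Moreover'' part assume $\sfA$ is associative and $\sfB\ne0$. Put $\sfA_0=T(\sfB,\sfB\times\sfB)$, $\sfC_0=\sfB\times\sfB$, $\fs_0=D_{\sfB,\sfC_0}$. Two auxiliary facts are the keys: solving the second equation of~(6) for $c_2T(b,c_1)$ with $c_1\in\sfC_0$ makes two of the three right-hand terms manifestly lie in $\sfC_0$ (using $b\times\sfB\subseteq\sfC_0$, $\sfC_0\sfA\subseteq\sfC_0$, and $\fs\sfC_0\subseteq\sfC_0$), forcing $\sfC\cdot\sfA_0\subseteq\sfC_0$; the part~(4)-derived identity $D_{b,c_1}c_2-D_{b,c_2}c_1=2(c_2T(b,c_1)-c_1T(b,c_2))$ combined with $\sfC\cdot\sfA_0\subseteq\sfC_0$ then gives $\fs_0\cdot\sfC\subseteq\sfC_0$. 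With these, together with $\sfA_0\subseteq\sfZ(\sfA)$, with $D_{\sfA,\sfA_0}\subseteq\fs_0$ by~(3), $\sfC_0\sfA\subseteq\sfC_0$ by~(4), and the $\fs$-invariance of $\sfA_0$ and $\sfC_0$, I would verify that
\[
\bar\cL=\bigl(\fsl(\VV)\otimes\sfA_0\bigr)\oplus(\VV\otimes\sfB)\oplus\bigl(\VV^*\otimes\sfC_0\bigr)\oplus\fs_0
\]
is an ideal of $\cL$. Since $\bar\cL\supseteq\VV\otimes\sfB\ne0$, simplicity forces $\bar\cL=\cL$, giving $\sfA=\sfA_0=T(\sfB,\sfB\times\sfB)$ and $\sfC=\sfC_0=\sfB\times\sfB$; commutativity of $\sfA$ follows for free from $\sfA\subseteq\sfZ(\sfA)$. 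The main obstacle I anticipate is the closure of $\bar\cL$ under the bracket $[\VV\otimes\sfB,\VV^*\otimes\sfC]$ --- equivalently the inclusions $T(\sfB,\sfC)\subseteq\sfA_0$ and $D_{\sfB,\sfC}\subseteq\fs_0$ --- which appears circular at first and requires an iterated use of~(6) together with the ideal generated by $\VV\otimes\sfB$ (which by a parallel, easier analysis is already all of $\cL$, yielding $T(\sfB,\sfC)=\sfA$ and $D_{\sfB,\sfC}=\fs$ as an intermediate step); combining these two constraints is the crux.
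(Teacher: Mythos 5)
Your treatment of the first three bullets is correct and, apart from two micro-arguments, coincides with the paper's. The two places you deviate are both valid and arguably cleaner: for $T(\sfB,\sfC)\subseteq\sfN(\sfA)$ you compute the associator $(T(b,c),a_1,a_2)$ directly from items (2) and (3), whereas the paper invokes the fact that $\ad_{T(b,c)}=D_{b,c}$ is a derivation of the alternative algebra $\sfA$ and hence $T(b,c)$ lies in the nucleus; and for the centrality of $T(\sfB,\sfB\times\sfB)$ you observe that $[T(b_1,b_2\times b_3),a]=D_{b_1,b_2\times b_3}a$ is totally symmetric in $(b_1,b_2,b_3)$ by item (5) while its cyclic sum vanishes (the linearization of $D_{b,b\times b}=0$), so it is killed by $3$ and hence zero in characteristic $\ne 3$ --- the paper instead chases $aT(b_1,b_2\times b_3)$ around to $T(b_1,b_2\times b_3)a$ using (3), (4), (5). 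Both routes are fine. The third bullet (symmetrize the first identity of item (6) in $b_1,b_2$) is exactly the paper's argument, as is the ideal $\bigl(\fsl(\VV)\otimes\sfE(\sfA)\bigr)\oplus D_{\sfE(\sfA),\sfA}$ and the resulting dichotomy.

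The one genuine problem is the ``Moreover'' clause, which you leave unresolved: you set up $\bar\cL=\bigl(\fsl(\VV)\otimes\sfA_0\bigr)\oplus(\VV\otimes\sfB)\oplus(\VV^*\otimes\sfC_0)\oplus\fs_0$ with $\sfA_0=T(\sfB,\sfB\times\sfB)$ and flag the inclusion $T(\sfB,\sfC)\subseteq\sfA_0$ as a ``crux.'' That construction is a detour, and the difficulty dissolves if you simply carry out the ``parallel, easier analysis'' you mention in passing: the ideal of $\cL$ generated by $\VV\otimes\sfB$ is
\[
\bigl(\fsl(\VV)\otimes T(\sfB,\sfC)\bigr)\oplus\bigl(\VV\otimes \sfB\bigr)\oplus\bigl(\VV^*\otimes (\sfB\times \sfB)\bigr)\oplus D_{\sfB,\sfC},
\]
where closure follows from $T(\sfB,\sfC)$ being an ideal of $\sfA$, from $D_{\sfA,T(\sfB,\sfC)}=D_{\sfA\sfB,\sfC}-D_{\sfB,\sfC\sfA}\subseteq D_{\sfB,\sfC}$ (item (3)), from $(\sfB\times\sfB)\sfA\subseteq(\sfA\sfB)\times\sfB$ (item (4)), from $(\sfB\times\sfB)\times\sfC\subseteq\sfB$ (item (6)), and from the $\fs$-invariance in item (0). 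Simplicity then forces this ideal to be all of $\cL$, so in one stroke $\sfC=\sfB\times\sfB$ \emph{and} $\sfA=T(\sfB,\sfC)=T(\sfB,\sfB\times\sfB)$, which by your own second bullet lies in $\sfZ(\sfA)$; hence $\sfA$ is commutative and associative. No second ideal $\bar\cL$ and no iteration of (6) is needed.
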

\begin{proof}
For any $a_1,a_2,a_3\in \sfA$ and $b\in \sfB$,
\[
\begin{split}
(a_1,a_2,a_3)b&=(a_1a_2)(a_3b)-a_1((a_2a_3)b)\\
&=a_1(a_2(a_3b))-a_1(a_2(a_3b))=0,
\end{split}
\]
because of Theorem \ref{th:Lie}, item (2). Also, this result shows that $\ann_\sfA(\sfB)=\{a\in \sfA\mid a\sfB=0\}$ is an ideal of $\sfA$. Hence, $\sfE(\sfA)\sfB=0$, as $\sfE(\sfA)$ is the ideal generated by $(\sfA,\sfA,\sfA)$.
In a similar manner,  one proves $\sfC \sfE(\sfA)=0$.

For any $b\in\sfB$ and $c\in\sfC$, $T(b,c)$ is an element of $\sfA$, and $\ad_{T(b,c)}:a\mapsto [T(b,c),a]=D_{b,c}a$ is a derivation of $\sfA$ by the previous theorem.  Since $\sfA$ is alternative, this shows that $T(b,c)$ is in the associative nucleus $\sfN(\sfA)$. Now for $b_1,b_2,b_3\in \sfB$, $aT(b_1,b_2\times b_3)=T(ab_1,b_2\times b_3)=T(b_2,b_3\times(ab_1))=T(b_2,(b_3\times b_1)a)=T(b_2,b_3\times b_1)a=T(b_1,b_2\times b_3)a$, which proves that $T(\sfB,\sfB\times\sfB)$ is an ideal of $\sfA$ contained in the center $\sfZ(\sfA)$.  By similar arguments,  $T(\sfC\times\sfC,\sfC)$ is shown to be contained in $\sfZ(\sfA)$ too.

For any $b_1,b_2\in\sfB$ and $c\in\sfC$, the previous theorem gives,
\[
(b_1\times b_2)\times c
     =-\frac{1}{3}T(b_1,c)b_2+\frac{1}{3}D_{b_1,c}b_2-T(b_2,c)b_1.
\]
We permute $b_1$ and $b_2$ and use the fact that $\times$ is symmetric to get
\[
D_{b_1,c}b_2-D_{b_2,c}b_1=2\bigl(T(b_2,c)b_1-T(b_1,c)b_2\bigr).
\]
With the same arguments we prove
\[
D_{b,c_2}c_1-D_{b,c_1}c_2=2\bigl(c_1T(b,c_2)-c_2T(b,c_1)\bigr)
\]
for any $b\in\sfB$, and $c_1,c_2\in\sfC$.

Finally, since the ideal $\sfE(\sfA)$ of the alternative algebra $\sfA$ is invariant under derivations, the subspace $(\fsl(\VV)\otimes \sfE(\sfA)\bigr)\oplus D_{\sfE(\sfA),\sfA}$ is an ideal of the Lie algebra $\cL$. In particular, if $\cL$ is simple, then either $\sfA=\sfE(\sfA)$ and $\sfB=\sfC=0$,   or
$\sfE(\sfA) = 0$ and $\sfA$ is associative. Moreover, if $\sfB$ is nonzero, the ideal of $\cL$ generated by $\VV\otimes \sfB$ is $\bigl(\fsl(\VV)\otimes T(\sfB,\sfC)\bigr)\oplus\bigl(\VV\otimes \sfB\bigr)\oplus\bigl(\VV^*\otimes (\sfB\times \sfB)\bigr)\oplus D_{\sfB,\sfC}$.
Hence if $\cL$ is simple,  we obtain $\sfC=\sfB\times\sfB$ and $\sfA=T(\sfB,\sfC)=T(\sfB,\sfB\times\sfB)$, which is commutative and associative.
\end{proof}

\end{section}


\begin{section}{Structurable algebras}

This section is devoted to establishing a relationship between the Lie algebras with prescribed $\fsl_3$ decomposition considered above with a class of structurable algebras. This will be done by exploiting the action of a subgroup of the group of automorphisms of the Lie algebra isomorphic to the symmetric group $\sfS_4$.

\begin{theorem}\label{th:structurable}
Let $\cL$ be a Lie algebra which contains a subalgebra isomorphic to $\fsl(\VV)$ for a vector space $\VV$ of dimension $3$, so that $\cL$ decomposes, as a module for $\fsl(\VV)$,  as in \eqref{eq:LslVVs}. Then, with the notation used so far, the vector space
\begin{equation}\label{eq:calA}
\cA=\begin{pmatrix} \sfA&\sfC\\ \sfB& \sfA\end{pmatrix},
\end{equation}
with the multiplication
\begin{equation}\label{eq:structurable}
\begin{pmatrix} a_1& c\\ b&a_2\end{pmatrix}\cdot\begin{pmatrix} a_1'& c'\\ b'&a_2'\end{pmatrix} =\begin{pmatrix} a_1a_1'-T(b',c)& c'a_1+ca_2'+b\times b'\\ a_1'b+a_2b'+c\times c'&a_2'a_2-T(b,c')\end{pmatrix} \end{equation}
\noindent and  the involution
\begin{equation}\label{eq:structurable2}
\overline{\begin{pmatrix} a_1& c\\ b&a_2\end{pmatrix}} =\begin{pmatrix} a_2& c\\ b&a_1\end{pmatrix}
\end{equation}
is a structurable algebra.
\end{theorem}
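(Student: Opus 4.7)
The plan is to build an $\sfS_4$-action on $\cL$ by Lie algebra automorphisms, identify the Klein-$4$ eigenspace $\cL_0$ with the matrix space $\cA$ of \eqref{eq:calA}, and then invoke the Elduque--Okubo theorem \cite{EO}, which attaches a structurable algebra structure to such an eigenspace. The concluding task is to verify that the abstract product and involution provided by that theorem are precisely \eqref{eq:structurable} and \eqref{eq:structurable2}.

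First I would realize $\VV$ as the standard $3$-dimensional irreducible $\sfS_4$-module (the zero-sum hyperplane in $\FF^4$). This yields an $\sfS_4$-action on $\VV$, hence on $\VV^*$ (contragredient) and on $\fsl(\VV)$ (conjugation); one extends trivially to $\sfA,\sfB,\sfC,\fs$. Since each bracket in \eqref{eq:bracket} is $GL(\VV)$-equivariant on the $\VV$-side and untouched on the other side, the action is by automorphisms of $\cL$. Restricted to $V_4=\langle\tau_1,\tau_2\rangle$, the module $\VV$ decomposes as the sum of the three non-trivial characters of $V_4$, each with multiplicity one. Fix a $V_4$-eigenbasis $\{v_1,v_2,v_3\}$ of $\VV$ with $v_1$ carrying the character $\chi_{01}\colon\tau_1\mapsto 1,\ \tau_2\mapsto -1$ of $\cL_0$, and let $\{f_1,f_2,f_3\}$ be the dual basis, which carries the same characters. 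The endomorphisms $E_{ij}=v_i\otimes f_j$ carry the product character $\chi_i\chi_j$, so the $\chi_{01}$-part of $\fsl(\VV)\otimes\sfA$ is spanned by $E_{pq}\otimes\sfA$ and $E_{qp}\otimes\sfA$ for the two indices $p,q\in\{1,2,3\}$ different from $1$, while the $\chi_{01}$-parts of the remaining summands in \eqref{eq:LslVVs} are $v_1\otimes\sfB$ and $f_1\otimes\sfC$. This furnishes a vector-space isomorphism between $\cL_0$ and $\cA$ respecting the four blocks.

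Second, the involution on $\cL_0$ comes from any transposition in $\sfS_4$ that normalizes $V_4$ and fixes the character $\chi_{01}$. For example, $\tau=(1\,2)$ conjugates $\tau_1\mapsto\tau_1$ and $\tau_2\mapsto\tau_1\tau_2$, so $\tau\cdot\chi_{01}=\chi_{01}$; on eigenvectors, $\tau$ fixes $v_1$ and $f_1$ and swaps $v_2\leftrightarrow v_3$ (after normalization), so it fixes the $\sfB$- and $\sfC$-parts of $\cL_0$ and swaps its two copies of $\sfA$, which is exactly \eqref{eq:structurable2}. For the product one uses a $3$-cycle $\sigma\in\sfS_4$, which cyclically permutes the three non-trivial characters of $V_4$; the Elduque--Okubo formula, roughly of the form $x\cdot y=\kappa\,\pi_{\cL_0}\bigl[\sigma x,\sigma^{-1}y\bigr]$ for a suitable constant $\kappa$, then puts a unital structurable algebra structure on $\cL_0$ satisfying Allison's identities.

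The last step is to match this abstract product with \eqref{eq:structurable}. Expanding each bracket of the form $[E_{pq}\otimes a,E_{p'q'}\otimes a']$, $[E_{pq}\otimes a,v_1\otimes b]$, $[E_{pq}\otimes a,f_1\otimes c]$, $[v_1\otimes b,f_1\otimes c]$, $[v_1\otimes b,v_1\otimes b']$, and $[f_1\otimes c,f_1\otimes c']$ via \eqref{eq:bracket}, applying the $\sfS_4$-transport, and projecting onto $\cL_0$, one recovers block-by-block the terms in \eqref{eq:structurable}: the alternative product on $\sfA$ (known from \cite{BM}) gives $a_1a_1'$ and $a_2'a_2$ on the diagonal; the $\sfB$--$\sfC$ brackets, via the trace-free part in the fourth line of \eqref{eq:bracket}, contribute the $T$-terms with the appropriate sign; the $\sfA$--$\sfB$ and $\sfA$--$\sfC$ brackets yield the module actions $a_1'b$, $a_2b'$, $c'a_1$, $ca_2'$; and the $\sfB$--$\sfB$ and $\sfC$--$\sfC$ brackets produce the symmetric maps $b\times b'$ and $c\times c'$. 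The main obstacle is precisely this matching: one must choose the $3$-cycle $\sigma$ and the scalar $\kappa$ so that the various factors of $\tfrac12,\tfrac13$ in \eqref{eq:bracket} together with the signs coming from the identifications $\bigwedge^2\VV\simeq\VV^*$ and $\bigwedge^2\VV^*\simeq\VV$ combine into exactly the coefficients of \eqref{eq:structurable}. Once the bookkeeping is complete, the structurable identities follow from \cite{EO}, completing the proof.
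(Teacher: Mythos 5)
Your overall strategy is the same as the paper's: induce an $\sfS_4$-action on $\cL$ from one on $\VV$, identify the eigenspace $\cL_0=\{X\mid \tau_1X=X,\ \tau_2X=-X\}$ with the matrix space $\cA$, and invoke \cite[Thm.~7.5]{EO} to obtain the structurable product and involution. However, there is a concrete error at the starting point. You take $\VV$ to be the standard $3$-dimensional $\sfS_4$-module and justify the extension to automorphisms of $\cL$ by asserting that the brackets in \eqref{eq:bracket} are $\mathsf{GL}(\VV)$-equivariant. They are not: the lines $[u_1\otimes b_1,u_2\otimes b_2]=(u_1\wedge u_2)\otimes(b_1\times b_2)$ and its dual rest on the identifications $\bigwedge^2\VV\cong\VV^*$ and $\bigwedge^2\VV^*\cong\VV$ fixed by the form $\det$, and for $g\in\mathsf{GL}(\VV)$ one has $g(u_1)\wedge g(u_2)\leftrightarrow \det(g)\,g\cdot(u_1\wedge u_2)$ under that identification, so only $\mathsf{SL}(\VV)$ acts by automorphisms. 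On the standard module a transposition such as $\tau=(12)$ acts with determinant $-1$, so in your setup $\tau$ is \emph{not} an automorphism of $\cL$ --- and $\tau$ is precisely the element appearing in both the involution $\bar X=-\tau X$ and the product $X\cdot Y=-\tau([\varphi X,\varphi^2 Y])$ of \cite{EO}. The paper avoids this by using the tensor product of the standard module with the sign module (formulas \cite[(7.1)]{EO}), which embeds $\sfS_4$ in $\mathsf{SL}(\VV)$. The twist also reverses your involution computation: with the correct action $\tau$ sends $e_1\mapsto -e_1$ rather than fixing it, and it is the extra minus sign in $\bar X=-\tau X$ that then leaves the $\sfB$- and $\sfC$-components fixed, as \eqref{eq:structurable2} requires; with your untwisted $\tau$ the same formula would negate them.

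A second, smaller point: the Elduque--Okubo product is not of the form ``$\kappa$ times a projection of a bracket'' with an adjustable constant; the bracket $[\varphi X,\varphi^2 Y]$ already lies in the $\chi_{01}$-eigenspace, and the essential ingredient is the subsequent application of the automorphism $\tau$, with no free parameter $\kappa$ to tune. Consequently the final matching with \eqref{eq:structurable}, which you defer to ``bookkeeping,'' is where the paper's proof does its real work: one must find the correct block identification, namely $-e_2e_3^*\otimes a_1+e_3e_2^*\otimes a_2+e_1\otimes b+e_1^*\otimes c\leftrightarrow\left(\begin{smallmatrix}a_1&c\\ b&a_2\end{smallmatrix}\right)$ (note the asymmetric minus sign on one copy of $\sfA$), and then verify each block. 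Your proposal correctly locates which brackets feed which block of \eqref{eq:structurable}, but since the theorem asserts the product is \emph{exactly} that formula, this verification cannot be omitted.
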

\begin{proof}
Take a basis $\{e_1,e_2,e_3\}$ of $\VV$ with $\det(e_1\wedge e_2\wedge e_3)=1$ and its dual basis $\{e_1^*,e_2^*,e_3^*\}$ in $\VV^*.$

The symmetric group $\sfS_4$ acts on $\VV$ as follows \cite[(7.1)]{EO}:
\[
\begin{split}
\tau_1=(12)(34):&\quad e_1\mapsto e_1,\ e_2\mapsto -e_2,\ e_3\mapsto -e_3,\\
\tau_2=(23)(14):&\quad e_1\mapsto -e_1,\ e_2\mapsto e_2,\ e_3\mapsto -e_3,\\
\varphi=(123):&\quad e_1\mapsto e_2\mapsto e_3\mapsto e_1,\\
\tau=(12):&\quad e_1\mapsto -e_1,\ e_2\mapsto -e_3,\ e_3\mapsto -e_2.
\end{split}
\]
(Thus $\VV$ is the tensor product of the sign module and the standard irreducible 3-dimensional module for $\sfS_4$, and in this way,  $\sfS_4$ embeds in the special linear group $\mathsf{SL}(\VV)$.)

The inner product given by $(e_i\vert e_j)=\delta_{ij}$ for any  $i,j \in \{1,2,3\}$ is invariant under the action of $\sfS_4$, so $\VV$ is selfdual as  an $\sfS_4$-module, and the action of $\sfS_4$ on $\VV^*$ (where $\sigma v^*=v^*\sigma^{-1}$) is given by the ``same formulas'':
\[
\begin{split}
\tau_1=(12)(34):&\quad e_1^*\mapsto e_1^*,\ e_2^*\mapsto -e_2^*,\ e_3^*\mapsto -e_3^*,\\
\tau_2=(23)(14):&\quad e_1^*\mapsto -e_1^*,\ e_2^*\mapsto e_2^*,\ e_3^*\mapsto -e_3^*,\\
\varphi=(123):&\quad e_1^*\mapsto e_2^*\mapsto e_3^*\mapsto e_1^*,\\
\tau=(12):&\quad e_1^*\mapsto -e_1^*,\ e_2^*\mapsto -e_3^*,\ e_3^*\mapsto -e_2^*.
\end{split}
\]

Since $\sfS_4$ acts by elements in $\mathsf{SL}(\VV)$, this action of $\sfS_4$ on $\VV$ and on $\VV^*$ extends to an action  by automorphisms on the whole algebra $\cL$. Then the subspace
\[
\cL_0=\{ X\in \cL \mid  \tau_1X=X,\ \tau_2 X=-X\}
\]
becomes a structurable algebra \cite[Thm. 7.5]{EO} with involution and multiplication given by the following formulas,
\[
\begin{split}
\bar  X&=-\tau X,\\
X\cdot Y&= -\tau\bigl([\varphi X,\varphi^2 Y]\bigr),
\end{split}
\]
for any $X,Y\in \cL_0$.

But we easily deduce that
\[
\cL_0= (e_2e_3^*\otimes \sfA)\oplus(e_3e_2^*\otimes \sfA)\oplus (e_1\otimes \sfB)\oplus (e_1^*\otimes \sfC).
\]

Identifying $\cL_0$ with the  $2 \times 2$ matrices $\cA=\begin{pmatrix} \sfA&\sfC\\ \sfB& \sfA\end{pmatrix}$ by means of
\[
-e_2e_3^*\otimes a_1+ e_3e_2^*\otimes a_2 +e_1\otimes b + e_1^*\otimes c\leftrightarrow \begin{pmatrix} a_1& c\\ b&a_2\end{pmatrix},
\]
we determine that the structurable product and the involution become
\[
\begin{split}
\begin{pmatrix} a_1& c\\ b&a_2\end{pmatrix}\cdot\begin{pmatrix} a_1'& c'\\ b'&a_2'\end{pmatrix}&=\begin{pmatrix} a_1a_1'-T(b',c)& c'a_1+ca_2'+b\times b'\\ a_1'b+a_2b'+c\times c'&a_2'a_2-T(b,c')\end{pmatrix}\\
\overline{\begin{pmatrix} a_1& c\\ b&a_2\end{pmatrix}}&=\begin{pmatrix} a_2& c\\ b&a_1\end{pmatrix},
\end{split}
\]
as required.
\end{proof}

\smallskip

Items (5) and (6) of Theorem \ref{th:Lie} show that for any $b\in \sfB$ and $c\in \sfC$,
\begin{equation}\label{eq:bbbb}
\begin{split}
(b\times b)\times(b\times b)&=-\frac{4}{3}T(b,b\times b)b,\\
(c\times c)\times(c\times c)&=-\frac{4}{3}cT(c\times c,c).
\end{split}
\end{equation}
Also, using Theorem \ref{th:Lie} and Corollary \ref{co:Lie} we compute that
\[
\begin{split}
(c\times (b\times b))\times b&=-\frac{4}{3}(T(b,c)b)\times b+\frac{1}{3}(D_{b,c}b)\times b\\
 &=-\frac{4}{3}(b\times b)T(b,c)+\frac{1}{6}D_{b,c}(b\times b)\quad\text{(as the product}\\[-2pt]
 &\qquad\qquad\qquad \text{$b_1\times b_2$ from $\sfB\times\sfB$ into $\sfC$ is $\fs$-invariant)}\\
 &=-\frac{4}{3}(b\times b)T(b,c)+\frac{1}{3}\bigl((b\times b)T(b,c)-cT(b,b\times b)\bigr)\\
 &=-(b\times b)T(b,c)-\frac{1}{3}cT(b,b\times b),
\end{split}
\]
and an analogous result with the roles of $b$ and $c$ interchanged. So we conclude that the equations
\begin{equation}\label{eq:bbbc}
\begin{split}
(c\times(b\times b))\times b&=-(b\times b)T(b,c)-\frac{1}{3}cT(b,b\times b),\\
(b\times (c\times c))\times c&=-T(b,c)(c\times c)-\frac{1}{3}T(c\times c,c)b,
\end{split}
\end{equation}
hold for any $b\in\sfB$ and $c\in\sfC$.

\smallskip

Equations \eqref{eq:bbbb} and \eqref{eq:bbbc} are precisely the ones that appear in \cite[Ex. 6.4]{AF}, and are needed to ensure that the algebra defined there, which coincides with our $\cA$,  but with the added restrictions of $\sfA$ being commutative and associative, is structurable. (Note that the bilinear form $T(.,.)$ considered in \cite[Ex. 6.4]{AF} equals our $-T(.,.)$.)

However some of the previous arguments show that, if our structurable algebra $\cA$ is simple and $\sfA\ne 0$, then $\sfA$ is simple, and since $T(\sfB,\sfB\times \sfB)$ and $T(\sfC\times \sfC,\sfC)$ are ideals of $\sfA$ contained in the center $\sfZ(\sfA)$, either $\sfA$ is commutative and associative, or else $T(\sfB,\sfB\times \sfB)=0=T(\sfC\times \sfC,\sfC)$. But in this case,  the subspace $\begin{pmatrix} 0&\sfB\times \sfB\\ \sfC\times \sfC&0\end{pmatrix}$ becomes an ideal, so if $\cA$ is simple either $\sfA$ is commutative and associative, or else $\sfB\times \sfB=0=\sfC\times \sfC$.

Therefore, when considering simple algebras, we are dealing exactly with the situation considered by Allison and Faulkner in \cite{AF}.

\medskip

Theorem \ref{th:structurable} shows that the restrictions on the bilinear maps involved are sufficient to ensure that the algebra $\cA$ in \eqref{eq:calA}, with multiplication
\eqref{eq:structurable} and involution \eqref{eq:structurable2}  is a structurable algebra.

A natural question to ask is whether these conditions are also necessary.  More precisely, does any structurable algebra of the form $\cA$ as in \eqref{eq:calA} with multiplication \eqref{eq:structurable} and involution \eqref{eq:structurable2}, constructed from a unital alternative algebra $\sfA$, left and right unital ``associative'' modules $\sfB$ and $\sfC$, and  bilinear maps $T(b,c)$, $b_1\times b_2$, and $c_1\times c_2$, coordinatize a Lie algebra $\cL$ with a subalgebra isomorphic to $\fsl(\VV)$ for a vector space $\VV$ of dimension $3$ and  with decomposition as in \eqref{eq:LslVVs}?    (We do not impose any further conditions on these bilinear maps besides requiring that the resulting algebra $\cA$ be structurable.)

Our last result answers this question in the affirmative.

\begin{theorem}\label{th:converse}
Let $\sfA$ be a unital alternative algebra;  let $\sfB$ (respectively $\sfC$) be a left (respectively right) unital associative module for $\sfA$;  and let $\sfB\times\sfC\rightarrow \sfA: (b,c)\mapsto T(b,c)$, $\sfB\times\sfB\rightarrow \sfC: (b_1,b_2)\mapsto b_1\times b_2$, and  $\sfC\times\sfC\rightarrow \sfB: (c_1,c_2)\mapsto c_1\times c_2$ be bilinear maps which make the vector space $\cA$ in \eqref{eq:calA} with the multiplication \eqref{eq:structurable} and involution in \eqref{eq:structurable2} into a structurable algebra. Then there is a Lie algebra $\cL$ containing a subalgebra isomorphic to $\fsl(\VV)$, for a vector space $\VV$ of dimension $3$, such that $\cL$ decomposes as in \eqref{eq:LslVVs} for a suitable vector space $\fs$,  such that the Lie bracket on $\cL$ is given by \eqref{eq:bracket} for some bilinear maps
$\sfA\times\sfA\rightarrow\fs, (a,a')\mapsto D_{a,a'}$,
$\sfB\times\sfC\rightarrow\fs: (b,c)\mapsto D_{b,c}$, $\fs\times\sfA\rightarrow \sfA: (d,a)\mapsto da$, $\fs\times\sfB\rightarrow \sfB: (d,b)\mapsto db$ and $\fs\times\sfC\rightarrow \sfC: (d,c)\mapsto dc$.
\end{theorem}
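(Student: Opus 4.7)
The plan is to invoke Theorem~\ref{th:Lie} in the forward direction. I will produce, from the structurable algebra data of $\cA$, a candidate for $\fs$, explicit formulas for the bilinear maps $D_{a,a'}\colon\sfA\times\sfA\to\fs$ and $D_{b,c}\colon\sfB\times\sfC\to\fs$, and an action of $\fs$ on each of $\sfA$, $\sfB$, $\sfC$. Once conditions (0)--(6) of Theorem~\ref{th:Lie} are verified, that theorem immediately produces the required Lie algebra structure on $\cL$ with decomposition \eqref{eq:LslVVs}.

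The definitions of $D_{a,a'}$ and $D_{b,c}$ are essentially forced by Theorem~\ref{th:Lie}. For $a_1,a_2\in\sfA$, I would define $D_{a_1,a_2}$ to act on $\sfA$, $\sfB$, $\sfC$ by
\[
D_{a_1,a_2}(a_3)=[[a_1,a_2],a_3]+3\bigl((a_1a_3)a_2-a_1(a_3a_2)\bigr),\quad D_{a_1,a_2}(b)=[a_1,a_2]b,\quad D_{a_1,a_2}(c)=c[a_2,a_1].
\]
For $b\in\sfB$ and $c\in\sfC$, define $D_{b,c}$ on $\sfA$ by $D_{b,c}(a)=[T(b,c),a]$, and on $\sfB$, $\sfC$ by
\[
D_{b,c}(b')=3(b\times b')\times c+T(b,c)b'+3T(b',c)b,\qquad D_{b,c}(c')=-3\,b\times(c'\times c)-3\,c\,T(b,c')-c'\,T(b,c),
\]
as dictated by rearranging condition~(6) of Theorem~\ref{th:Lie}. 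Let $\fs$ be the Lie subalgebra of $\End(\sfA)\oplus\End(\sfB)\oplus\End(\sfC)$ generated by all such $D_{a_1,a_2}$ and $D_{b,c}$, acting componentwise. With this definition, condition~(0) of Theorem~\ref{th:Lie} will hold automatically once the bilinear maps are shown to be $\fs$-invariant.

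The core of the argument is the verification of conditions~(1)--(6). This is carried out by translating Allison's defining identity for structurable algebras, namely
\[
[V_{x,y},V_{z,w}]=V_{V_{x,y}z,w}-V_{z,V_{y,x}w}\quad\text{with}\quad V_{x,y}(z)=(x\bar y)z+(z\bar y)x-(z\bar x)y,
\]
into identities among $(\sfA,\sfB,\sfC,T,\times)$ by substituting elementary matrices of $\cA$ supported in one of its four blocks. Purely diagonal substitutions recover the alternativity of $\sfA$ and the derivation formula in condition~(1); mixed diagonal/off-diagonal substitutions yield conditions~(2) and~(3) (associativity of the module actions and the behaviour of $T$); and purely off-diagonal substitutions produce the trilinear identities in conditions~(4)--(6), and also verify the internal consistency of the formulas that \emph{define} $D_{b,c}$ above.

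The main obstacle is the detailed verification of conditions~(4)--(6), which requires careful bookkeeping of the structurability axiom applied to off-diagonal elements. Allison and Faulkner in \cite[Ex.~6.4]{AF} essentially carried out this computation in the special case when $\sfA$ is commutative and associative; in the general alternative setting considered here, the hypothesis that $\sfB$ and $\sfC$ are associative modules (equivalently, that $\sfE(\sfA)\sfB=0=\sfC\sfE(\sfA)$, as in Corollary~\ref{co:Lie}) reduces the off-diagonal identities to exactly those treated in \cite{AF}, while the purely diagonal identities for $D_{a_1,a_2}$ on $\sfA$ are handled by the classical Kantor--Tits construction for alternative algebras.
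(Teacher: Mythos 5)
Your strategy is genuinely different from the paper's, and as written it has a serious gap: the entire substantive content of the theorem is deferred to an unperformed computation. You propose to define $D_{a_1,a_2}$ and $D_{b,c}$ by the formulas forced by Theorem~\ref{th:Lie}, let $\fs$ be the Lie algebra they generate inside $\End(\sfA)\oplus\End(\sfB)\oplus\End(\sfC)$, and then verify conditions (0)--(6). But the claim that the single structurability identity $[V_{x,y},V_{z,w}]=V_{V_{x,y}z,w}-V_{z,V_{y,x}w}$ yields all of these conditions is exactly what needs proof, and you only assert it. The hardest items are precisely the ones your sketch glosses over: that the triples of operators $D_{a_1,a_2}$ and $D_{b,c}$ satisfy the \emph{operator} identities $\sum_{\mathrm{cyclic}}D_{a_1,a_2a_3}=0$, $D_{a,T(b,c)}=D_{ab,c}-D_{b,ca}$ and $D_{b,b\times b}=0$ simultaneously on all three of $\sfA$, $\sfB$, $\sfC$; that $\fs$ as defined is closed in a way compatible with $[d,D_{b,c}]=D_{db,c}+D_{b,dc}$ (needed for the $\fs$-invariance of the maps $(a,a')\mapsto D_{a,a'}$ and $(b,c)\mapsto D_{b,c}$ required in condition (0)); and that $(b_1,b_2,b_3)\mapsto T(b_1,b_2\times b_3)$ is symmetric. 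Your appeal to \cite[Ex.~6.4]{AF} does not close this: that example runs in the \emph{opposite} direction (from a list of identities to structurability), assumes $\sfA$ commutative and associative, and the asserted ``reduction of the off-diagonal identities to exactly those treated in \cite{AF}'' via the associative-module hypothesis is not justified. A further small slip: you treat ``$\sfB$, $\sfC$ associative modules'' as equivalent to $\sfE(\sfA)\sfB=0=\sfC\sfE(\sfA)$ and as an extra hypothesis you may use, but in the theorem these are hypotheses on the input data, not consequences you have re-derived in your setting.

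The paper avoids all of this by never verifying conditions (0)--(6) at all. It takes the structurable algebra $(\cA,-)$ and forms the Allison--Faulkner Lie algebra $\mathcal{K}(\cA,-,\gamma,\mathcal{T}_I)=\mathcal{T}_I\oplus\cA[12]\oplus\cA[23]\oplus\cA[31]$ with $\gamma=(1,1,1)$, which is already known to be a Lie algebra for \emph{any} structurable algebra by \cite[Thms.~4.1 and 5.5]{AF}. It then locates $\fsl_3[1]$ inside it (spanned by the $\left(\begin{smallmatrix}a&0\\0&0\end{smallmatrix}\right)[ij]$ with $a=1$ together with suitable triples of left/right multiplications), checks that the blocks $\fsl_3[a]$, $\VV[b]$, $\VV^*[c]$ and $\fs=\{(D,D,D)\mid D\in\Der(\cA,\cdot,-)\}$ realize the decomposition \eqref{eq:LslVVs}, and reads off the bracket \eqref{eq:bracket}. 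The existence of $\fs$ and of the maps $D_{\cdot,\cdot}$ is thereby obtained for free rather than constructed by hand. If you want to salvage your approach, you would need to carry out the full translation of the $V$-operator identity block by block; it is doable in principle but amounts to redoing a large part of \cite{AF}, which is why the paper cites that construction instead.
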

\begin{proof}
Consider the Lie algebra $\cL=\mathcal{K}(\cA,-,\gamma,\mathcal{V})$ in \cite[Sec.~4]{AF} attached to the structurable algebra $(\cA,-)$, the triple $\gamma=(1,1,1)$, and the Lie subalgebra $\mathcal{V}=\mathcal{T}_I$. This Lie algebra $\cL$, which coincides with the Lie algebra $\g(\cA,\cdot,-)$ in \cite[Ex. 3.1]{EO1}, is the direct sum
\[
\cL=\mathcal{T}_I\oplus\cA[12]\oplus\cA[23]\oplus\cA[31],
\]
where $\mathcal{T}_I$ is the span of the triples $T = (T_1,T_2,T_3)$  with
\begin{equation}\label{eq:TI}
\begin{split}
T_i&=L_{\bar x}L_y-L_{\bar y}L_x,\\
T_j&=R_{\bar x}R_y-R_{\bar y}R_x,\\
T_k&=R_{\bar xy-\bar yx}+L_yL_{\bar x}-L_xL_{\bar y},
\end{split}
\end{equation}
for $x,y\in\cA$ and $(i,j,k)$ a cyclic permutation of $(1,2,3)$. Here $L_xy=xy=R_yx$. The subspace $\mathcal{T}_I$ is a Lie algebra with componentwise bracket, and the Lie bracket in $\cL$ is given by extending the bracket in $\mathcal{T}_I$ by setting $x[ij]=-\bar x[ji]$ for any $x\in\cA$ and
\begin{equation}\label{eq:KATI}
\begin{split}
[x[ij],y[jk]]&=-[x[jk],y[ij]]=(xy)[ik],\\
[T,x[ij]]&=-[x[ij],T]=T_k(x)[ij],\\
[x[ij],y[ij]]&=T,
\end{split}
\end{equation}
for $x,y\in\cA$, where $(i,j,k)$ is a cyclic permutation of $(1,2,3)$, and $T = (T_1, T_2, T_3)$ is as in \eqref{eq:TI}. Theorems 4.1 and 5.5 in \cite{AF} show that $\cL$ is indeed a Lie algebra. Since we are assuming
that  the characteristic of the field is $\ne 2,3$,  Corollary 3.5 of \cite{AF} shows that $\mathcal{T}_I=\{(D,D,D)\mid D\in \Der(\cA,\cdot,-)\}\oplus \{(L_{s_2}-R_{s_3},L_{s_3}-R_{s_1},L_{s_1}-R_{s_2})\mid  s_i\in \cA,\ \bar s_i=-s_i,\ s_1+s_2+s_3=0\}$.   Here $\Der(\cA,\cdot,-)$ is the space of derivations relative
to the product ``$\cdot$'' which commute with the involution ``$-$''.

For any $a\in\sfA$, consider the linear span $\fsl_3[a]$ of the elements
$\left(\begin{smallmatrix}a&0\\ 0&0\end{smallmatrix}\right)[ij]$
for $i\ne j$ and the triples $(L_{\alpha_2 s}-R_{\alpha_3 s},L_{\alpha_3 s}-R_{\alpha_1 s},L_{\alpha_1 s}-R_{\alpha_2 s})$ for $\alpha_i\in \FF$ with $\alpha_1+\alpha_2+\alpha_3=0$ and $s=\left(\begin{smallmatrix}a&\ 0\\ 0&-a\end{smallmatrix}\right)$. (Note that $\left(\begin{smallmatrix}a&0\\ 0&0\end{smallmatrix}\right)[ij]=-\left(\begin{smallmatrix}0&0\\ 0&a\end{smallmatrix}\right)[ji]$.)

Also for any $b\in \sfB$, consider the linear span $\VV[b]$ of the elements
$\left(\begin{smallmatrix}0&0\\ b&0\end{smallmatrix}\right)[ij]$, and for any $c\in\sfC$ the linear span $\VV^*[c]$ of the elements
$\left(\begin{smallmatrix}0&c\\ 0&0\end{smallmatrix}\right)[ij]$.

Straightforward computations using \eqref{eq:KATI} imply that
\begin{itemize}
\item $\fsl_3[1]$ is a Lie subalgebra of $\cL$ isomorphic to $\fsl(\VV)$ ($\dim\VV=3$),
\item $\fsl_3[a]$ is an adjoint module for $\fsl_3[1]$ for any $a\in\sfA$,
\item $\VV[b]$ is the natural module for $\fsl_3[1]$ for any $b\in\sfB$,
\item $\VV^*[c]$ is the dual module for $\fsl_3[1]$ for any $c\in\sfC$, and
\item $\fs=\{(D,D,D) \mid D\in\Der(\cA,\cdot,-)\}$ is a Lie subalgebra which commutes with $\fsl_3[1]$.
\end{itemize}Actually, if we fix a basis $\{e_1,e_2,e_3\}$ of $\VV$ as before with $\det(e_1\wedge e_2\wedge e_3)=1$ and the dual basis $\{e_1^*,e_2^*,e_3^*\}$ in $\VV^*$, we may identify
$\fsl_3[a]$ with $\fsl(\VV)\otimes a$ for $a\in\sfA$ by means of
\[
\begin{split}
e_ie_j^*\otimes a&\leftrightarrow \left(\begin{smallmatrix}a&0\\ 0&0\end{smallmatrix}\right)[ij], \qquad \hbox {\rm for} \ \ i \neq j\\
\sum_{i=1}^3\alpha_i e_ie_i^*\otimes a&\leftrightarrow
(L_{\alpha_2 s}-R_{\alpha_3 s},L_{\alpha_3 s}-R_{\alpha_1 s},L_{\alpha_1 s}-R_{\alpha_2 s}),
\end{split}
\]
for $\alpha_i\in\FF$ with $\alpha_1+\alpha_2+\alpha_3=0$ and $s=\left(\begin{smallmatrix}a&\ 0\\ 0&-a\end{smallmatrix}\right)$.
Also for any $b\in\sfB$ and $c\in\sfC$,  we may identify $\VV[b]$ with $\VV\otimes b$ and $\VV^*[c]$ with $\VV^*\otimes c$ via
\[
e_i\otimes b\leftrightarrow \left(\begin{smallmatrix}0&0\\ b&0\end{smallmatrix}\right)[jk],\qquad
e_i^*\otimes c\leftrightarrow \left(\begin{smallmatrix}0&c\\ 0&0\end{smallmatrix}\right)[jk]
\]
where $(i,j,k)$ is a cyclic permutation of $(1,2,3)$.

In this way we recover the decomposition in \eqref{eq:dec} with bracket as in \eqref{eq:bracket} for suitable maps $D_{.,.}$, as required.
\end{proof}

\end{section}


\end{document}